\documentclass[pdflatex,sn-mathphys-ay]{sn-jnl}

\usepackage{amsmath,amssymb,amsfonts}
\usepackage{amsthm}
\usepackage{mathtools}
\usepackage{booktabs}

\theoremstyle{thmstyleone}
\newtheorem{theorem}{Theorem}[section]
\newtheorem{proposition}[theorem]{Proposition}
\newtheorem{corollary}[theorem]{Corollary}

\theoremstyle{thmstyletwo}

\newtheorem{remark}[theorem]{Remark}

\theoremstyle{thmstylethree}
\newtheorem{definition}[theorem]{Definition}

\DeclareMathOperator{\Var}{Var}
\DeclareMathOperator{\Exp}{Exp}
\newcommand{\E}{\mathbb{E}}
\newcommand{\Pp}{\mathbb{P}}
\newcommand{\R}{\mathbb{R}}
\newcommand{\1}{\mathbf{1}}
\newcommand{\Law}{\mathcal{L}}
\newcommand{\Mclass}{\mathcal{M}}
\newcommand{\mgf}{\mathrm{mgf}}

\begin{document}

\title{Optimal Sub-Gamma Scales for Spectrally Positive Infinitely Divisible Laws}

\author*[1]{\fnm{Yichuan} \sur{Chen}}\email{798956520@qq.com}

\affil*[1]{\orgdiv{School of Art and Design}, \orgname{Zhejiang Sci-Tech University},
\orgaddress{\city{Hangzhou}, \postcode{310018}, \country{China}}}

\abstract{Let \(X\) be a centered spectrally positive infinitely divisible random variable with finite nonzero variance \(V\). We determine the smallest scale \(c\) in a right-sided sub-gamma bound whose quadratic proxy is fixed at \(V\). A probability transform of the normalized Kolmogorov canonical measure produces a nonnegative variable \(R_X\) for which \(\log \E e^{tX}=Vt^2M_{R_X}(t)/2\). The scale problem therefore reduces to comparing the MGF of \(R_X\) with that of an exponential law, leading to an exact one-dimensional variational formula. If \(X\) is non-Gaussian and has a positive exponential moment, the optimal scale is at least \(\kappa_3(X)/(3V)\), with equality precisely when \(R_X\) belongs to the M-class of Klar and Müller. Equality in the standard sub-gamma envelope occurs only for the Gaussian law or, in the non-Gaussian case, for centered compound Poisson laws with exponential jumps. For a general compound Poisson law, \(R_X\) is the second-order equilibrium distribution of the jump size. Exact calculations for gamma and bounded two-point jumps show that the optimum may occur at the origin, at the positive MGF boundary, or at an interior point.}

\keywords{infinitely divisible distribution, sub-gamma inequality, optimal scale, equilibrium distribution, M-class}

\pacs[MSC Classification]{60E07, 60E15, 60G51, 60K10}

\maketitle

\section{Introduction}\label{sec:intro}

Sub-gamma bounds describe a random variable through a quadratic variance proxy and a linear scale. For a centered random variable \(X\), a right-sided bound has the form
\begin{equation}\label{eq:intro-sg}
 \log \E e^{tX}
 \leq
 \frac{v t^2}{2(1-ct)},
 \qquad 0\leq t<1/c,
\end{equation}
with \(v>0\) and \(c\geq0\). Such bounds lead to Bernstein-type tail estimates and are standard in concentration theory \citep{BoucheronLugosiMassart2013}. The pair \((v,c)\) is not uniquely determined. In particular, increasing either parameter preserves feasibility, which can obscure how sharp a stated inequality is.

This paper fixes \(v\) at the true variance
\[
 V=\Var(X)
\]
and optimizes only the scale \(c\). The choice is forced locally: since
\[
 \log \E e^{tX}=\frac{Vt^2}{2}+o(t^2)
 \qquad (t\to0),
\]
no smaller quadratic proxy can hold in a neighborhood of the origin. Once \(v=V\), however, the smallest feasible \(c\) depends on the entire positive moment generating function (MGF), not only on a finite number of moments. Our aim is to identify that dependence for centered spectrally positive infinitely divisible laws.

Previous deviation inequalities for infinitely divisible distributions use the Lévy measure directly \citep{Houdre2002}. Related results cover norms of infinitely divisible vectors \citep{HoudreMarchalReynaudBouret2008} and compound Poisson concentration \citep{KontoyiannisMadiman2006}. Bias and Stein transforms provide further concentration bounds and probabilistic representations \citep{GoldsteinIslak2014,ArrasHoudre2019,BarmanUpadhyeVellaisamy2025}. In the present problem, the quadratic coefficient is fixed at \(V\), leaving the smallest compatible scale to be determined.

Higher-order equilibrium distributions are used in renewal, reliability, and actuarial theory \citep{NandaJainSingh1996,WillmotDrekicCai2005,NairPreeth2009}. \citet{KlarMuller2003} defined the M-class by comparing the MGF of a nonnegative law with that of an exponential law of the same mean; this MGF order is weaker than coefficientwise moment order. Optimal proxy problems have also been formulated variationally \citep{ArbelMarchalNguyen2020,BarretoMarchalArbel2026,LeskelaValimaa2026}. The canonical transform below places the present scale problem in this MGF order.

For a nonnegative variable \(R_X\) constructed from the canonical measure, we prove the exact factorization
\begin{equation}\label{eq:intro-factor}
 \log \E e^{tX}
 =
 \frac{Vt^2}{2}M_{R_X}(t).
\end{equation}
It follows that the optimal scale is
\[
 c_*(X)
 =
 \sup_{t>0}
 \frac{1-M_{R_X}(t)^{-1}}{t},
\]
where infinite MGF values are allowed. When \(X\) is non-Gaussian and has a positive exponential moment, expansion at the origin gives
\[
 c_*(X)\geq\frac{\kappa_3(X)}{3V},
\]
and equality holds precisely when \(R_X\) belongs to the M-class. Equality in the standard envelope also has a complete characterization: apart from the Gaussian case, the equality laws are centered compound Poisson variables with exponential jumps. For general compound Poisson laws, \(R_X\) is the second-order equilibrium distribution of the jump size. Gamma and bounded two-point jumps then illustrate control at the origin, at the positive MGF boundary, and at an interior argument.

The factorization is established in Section~\ref{sec:transform}. Section~\ref{sec:optimum} gives the variational formula and its local and boundary consequences. The M-class criterion and the equality laws appear in Section~\ref{sec:mclass}. Structural operations are treated in Section~\ref{sec:structure}, followed by the compound Poisson interpretation in Section~\ref{sec:cp}. Section~\ref{sec:examples} contains the exact examples, and Section~\ref{sec:discussion} discusses scope and limitations.

\section{The canonical remainder transform}\label{sec:transform}

\subsection{Setting}

Let \(X\) be centered, infinitely divisible, and spectrally positive, with
\[
 0<V=\Var(X)<\infty.
\]
Its Gaussian variance component is denoted by \(a\geq0\), and its Lévy measure \(\nu\) is supported on \((0,\infty)\). The centered Lévy--Khintchine representation can be written
\begin{equation}\label{eq:LK}
 K_X(t):=\log\E e^{tX}
 =
 \frac{a t^2}{2}
 +
 \int_{(0,\infty)}
 \bigl(e^{tx}-1-tx\bigr)\,\nu(dx)
\end{equation}
for every real \(t\) at which the right-hand side is finite. Finite variance gives
\begin{equation}\label{eq:variance}
 V=a+\int_{(0,\infty)}x^2\,\nu(dx).
\end{equation}
These formulas are standard consequences of the canonical representation of finite-variance infinitely divisible laws \citep{Sato1999,SteutelVanHarn2004}.

Introduce the finite Kolmogorov canonical measure
\begin{equation}\label{eq:H}
 H=a\delta_0+x^2\nu(dx)
\end{equation}
on \([0,\infty)\). Its total mass is \(V\). Equation~\eqref{eq:LK} becomes
\begin{equation}\label{eq:canonical}
 K_X(t)
 =
 \int_{[0,\infty)}g_t(x)\,H(dx),
 \qquad
 g_t(x)=
 \begin{cases}
 \dfrac{e^{tx}-1-tx}{x^2},&x>0,\\[6pt]
 \dfrac{t^2}{2},&x=0.
 \end{cases}
\end{equation}
The probabilistic interpretation below uses only the normalized measure \(H/V\). Related uses of biasing to interpret canonical or Lévy measures can be found in \citet{PakesSapatinasFosam1996} and \citet{ArrasHoudre2019}.

Let
\[
 Y_X\sim H/V,
\]
let \(B\) be independent of \(Y_X\) with density
\[
 f_B(b)=2(1-b)\1_{\{0<b<1\}},
\]
and define
\begin{equation}\label{eq:Rdef}
 R_X=BY_X.
\end{equation}
Thus \(B\sim\operatorname{Beta}(1,2)\), and \(R_X\) is nonnegative.

\subsection{Exact factorization}

\begin{theorem}[Canonical factorization]\label{thm:factor}
For every real \(t\) for which \(K_X(t)\) is finite,
\begin{equation}\label{eq:factor}
 K_X(t)=\frac{Vt^2}{2}M_{R_X}(t),
 \qquad
 M_{R_X}(t)=\E e^{tR_X}.
\end{equation}
For \(t>0\), either side of \eqref{eq:factor} is finite if and only if the other side is finite.
\end{theorem}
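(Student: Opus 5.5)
The whole argument rests on one pointwise identity for the kernel \(g_t\) in \eqref{eq:canonical}. For \(x>0\), Taylor's formula with integral remainder gives
\[
 e^{tx}-1-tx=t^2x^2\int_0^1(1-b)e^{tbx}\,db .
\]
Dividing by \(x^2\) and writing \(2(1-b)=f_B(b)\), this becomes
\[
 g_t(x)=\frac{t^2}{2}\int_0^1 f_B(b)\,e^{tbx}\,db=\frac{t^2}{2}\,\E e^{tBx}.
\]
At \(x=0\) the same formula holds trivially, since \(\E e^{0}=1\) and \(g_t(0)=t^2/2\). So for every real \(t\) and every \(x\geq0\),
\[
 g_t(x)=\frac{t^2}{2}\,\E e^{tBx},
\]
and both sides take values in \([0,\infty)\).

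I would then integrate this identity against \(H=V\cdot\Law(Y_X)\). Because the integrand \((b,x)\mapsto f_B(b)e^{tbx}\) is nonnegative and jointly measurable, Tonelli's theorem applies with no integrability assumptions:
\[
 \int g_t\,dH=\frac{Vt^2}{2}\int\!\!\int f_B(b)e^{tbx}\,db\,\frac{H(dx)}{V}=\frac{Vt^2}{2}\,\E e^{tBY_X}=\frac{Vt^2}{2}M_{R_X}(t).
\]
The last step uses the independence of \(B\) and \(Y_X\) and the definition \eqref{eq:Rdef}. This is an equality in \([0,\infty]\), valid for every real \(t\). Here \(g_t\geq0\), since \(e^u\geq1+u\).

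Two bookkeeping points remain. First, I need to justify that \(K_X(t)\) equals \(\int g_t\,dH\) exactly when the latter is finite. By \eqref{eq:LK} and \eqref{eq:canonical}, \(K_X(t)\) is given by that integral wherever the right side of \eqref{eq:LK} is finite, and the nonnegative integrand means finiteness of the integral is unambiguous. I would cite the standard fact that \(\E e^{tX}<\infty\) iff \(\int_{(1,\infty)}e^{tx}\nu(dx)<\infty\) (Sato, Thm.~25.17). Together with \(\int x^2\,d\nu<\infty\), this is equivalent to \(\int g_t\,dH<\infty\), so "\(K_X(t)\) finite" and "\(\int g_t\,dH\) finite" coincide. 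For \(t\leq0\) everything is automatically finite, because \(g_t\) is bounded by \(t^2/2\) and \(R_X\geq0\) gives \(M_{R_X}(t)\leq1\).

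Second, for \(t>0\) the factor \(Vt^2/2\) is strictly positive and finite. Hence finiteness of \(\int g_t\,dH\) is equivalent to finiteness of \(M_{R_X}(t)\), which is the stated equivalence. The only mildly delicate step is this identification of the domain of \(K_X\) with finiteness of the canonical integral. Even that reduces to the cited exponential-moment criterion, so I expect no real obstacle; the Taylor remainder identity carries the proof.
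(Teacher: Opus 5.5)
Your proof is correct and follows the paper's route. The paper obtains the same pointwise identity \(g_t(y)=\frac{t^2}{2}\E e^{tBy}\) by integrating \(\E e^{zB}=2(e^z-1-z)/z^2\) directly, which is the same computation as your Taylor integral remainder, and then integrates against \(H\) by Tonelli; your extra step identifying the finiteness domain of \(K_X\) with that of \(\int g_t\,dH\) via Sato's exponential-moment criterion makes explicit a point the paper leaves implicit in its use of \eqref{eq:LK}.
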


\begin{proof}
For \(z\in\R\), direct integration gives
\begin{equation}\label{eq:beta-int}
 \E e^{zB}
 =
 2\int_0^1(1-b)e^{zb}\,db
 =
 \frac{2(e^z-1-z)}{z^2},
\end{equation}
where the value at \(z=0\) is understood by continuity. Hence, for \(y\geq0\),
\[
 g_t(y)
 =
 \frac{t^2}{2}\E e^{tBy}.
\]
Integrating this identity with respect to \(Y_X\sim H/V\), and applying Tonelli when \(t>0\), yields
\[
 \int g_t(y)\,H(dy)
 =
 \frac{Vt^2}{2}\E e^{tBY_X}.
\]
Equation~\eqref{eq:canonical} proves the result. The same nonnegative-integrand argument shows the equivalence of finiteness for \(t>0\).
\end{proof}

The next formulas make the transform explicit at the Lévy-measure level.

\begin{proposition}[Law of the transform]\label{prop:Rlaw}
The law of \(R_X\) has an atom
\begin{equation}\label{eq:Ratom}
 \Pp(R_X=0)=\frac{a}{V}.
\end{equation}
On \((0,\infty)\), it has density
\begin{equation}\label{eq:Rdensity}
 q_X(r)=\frac{2}{V}\int_{(r,\infty)}(x-r)\,\nu(dx),
 \qquad r>0,
\end{equation}
and survival function
\begin{equation}\label{eq:Rsurvival}
 \Pp(R_X>r)
 =
 \frac{1}{V}\int_{(r,\infty)}(x-r)^2\,\nu(dx),
 \qquad r\geq0.
\end{equation}
\end{proposition}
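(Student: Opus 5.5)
The proof works directly from the definition \(R_X=BY_X\), with \(B\sim\operatorname{Beta}(1,2)\) independent of \(Y_X\sim H/V\) and \(H=a\delta_0+x^2\nu(dx)\). We first compute the survival function by conditioning on \(Y_X\), then read off the atom and the density from it.

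\emph{The atom.} Since \(B>0\) almost surely, \(R_X=0\) exactly when \(Y_X=0\). The measure \(\nu\) lives on \((0,\infty)\), so \(H(\{0\})=a\). Hence \(\Pp(R_X=0)=H(\{0\})/V=a/V\), which is \eqref{eq:Ratom}.

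\emph{The survival function.} For \(b\in[0,1]\),
\[
 \Pp(B>b)=\int_b^1 2(1-s)\,ds=(1-b)^2 .
\]
Fix \(r\geq0\) and condition on \(Y_X=y\).
\begin{itemize}
 \item If \(y\le r\), then \(By\le r\) almost surely, so \(\Pp(By>r)=0\). This covers the atom at \(y=0\), which contributes nothing.
 \item If \(y>r\), then \(\Pp(By>r)=\Pp(B>r/y)=(1-r/y)^2=(y-r)^2/y^2\).
\end{itemize}
Integrating against \(H/V\), which equals \(x^2\nu(dx)/V\) on \((0,\infty)\), gives
\[
 \Pp(R_X>r)=\frac1V\int_{(r,\infty)}\frac{(x-r)^2}{x^2}\,x^2\,\nu(dx)=\frac1V\int_{(r,\infty)}(x-r)^2\,\nu(dx),
\]
which is \eqref{eq:Rsurvival}. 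Fubini/Tonelli applies because every integrand is nonnegative. At \(r=0\) the right side equals \((V-a)/V\), consistent with the atom.

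\emph{The density.} The candidate density comes from differentiating \eqref{eq:Rsurvival} in \(r\). Rather than justify differentiation under the integral, we verify the identity
\[
 \int_r^\infty q_X(s)\,ds=\Pp(R_X>r),\qquad r>0 .
\]
By Tonelli,
\[
 \int_r^\infty\frac2V\int_{(s,\infty)}(x-s)\,\nu(dx)\,ds=\frac2V\int_{(r,\infty)}\int_r^x(x-s)\,ds\,\nu(dx).
\]
The inner integral is \((x-r)^2/2\), so the right side is \(\frac1V\int_{(r,\infty)}(x-r)^2\,\nu(dx)\). This matches the survival function, so the law restricted to \((0,\infty)\) is absolutely continuous with density \(q_X\).

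The integral defining \(q_X(r)\) is finite for every \(r>0\). On \((r,\infty)\) we have \(x-r\le x\le x^2/r\), so
\[
 \int_{(r,\infty)}(x-r)\,\nu(dx)\le\frac1r\int x^2\,\nu(dx)<\infty .
\]

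The main point requiring care is keeping track of the atom at \(0\) versus the continuous part; nonnegativity makes every interchange of integrals routine.
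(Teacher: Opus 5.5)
Your proof is correct and follows essentially the paper's argument. It uses the same atom argument via \(B>0\), the same conditioning on \(Y_X\), and the same Tonelli computation \(\int_r^x(x-s)\,ds=(x-r)^2/2\). The only difference is the order of the two steps. You get the survival function first from the \(\operatorname{Beta}(1,2)\) tail \((1-b)^2\) and then identify the density through its tail integrals. The paper instead mixes the conditional densities \(2(x-r)/x^2\) first and then integrates to obtain the survival function.
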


\begin{proof}
Because \(B>0\) almost surely, \(R_X=0\) precisely when \(Y_X=0\), which proves \eqref{eq:Ratom}. Conditional on \(Y_X=x>0\), the variable \(xB\) has density
\[
 r\longmapsto \frac{2(x-r)}{x^2}\1_{\{0<r<x\}}.
\]
Mixing this density against \(x^2\nu(dx)/V\) gives \eqref{eq:Rdensity}. Finally, Tonelli's theorem yields
\begin{align*}
 \Pp(R_X>r)
 &=
 \int_r^\infty q_X(s)\,ds\\
 &=
 \frac{2}{V}\int_{(r,\infty)}
 \left\{\int_r^x(x-s)\,ds\right\}\nu(dx)\\
 &=
 \frac{1}{V}\int_{(r,\infty)}(x-r)^2\,\nu(dx).
\end{align*}
\end{proof}

\subsection{The left side}

The same factorization gives a one-line left-tail consequence.

\begin{corollary}[Variance-exact left sub-Gaussian bound]\label{cor:left}
For every \(s\geq0\),
\begin{equation}\label{eq:left-cgf}
 K_X(-s)\leq\frac{Vs^2}{2}.
\end{equation}
Consequently, for every \(x\geq0\),
\begin{equation}\label{eq:left-tail}
 \Pp(X\leq-x)\leq\exp\left(-\frac{x^2}{2V}\right).
\end{equation}
The quadratic proxy \(V\) cannot be decreased in any neighborhood of the origin.
\end{corollary}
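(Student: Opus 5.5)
The plan is to apply Theorem~\ref{thm:factor} at the negative argument \(t=-s\) and then use Chernoff's bound. For \(s\geq0\), \(K_X(-s)\) is always finite. The integrand \(e^{-sx}-1+sx\) is nonnegative and bounded by \(s^2x^2/2\) on \((0,\infty)\). Since \(\int x^2\,\nu(dx)\leq V<\infty\) by \eqref{eq:variance}, the integral in \eqref{eq:LK} converges, so the factorization \eqref{eq:factor} applies at \(t=-s\). It gives
\[
 K_X(-s)=\frac{Vs^2}{2}M_{R_X}(-s)=\frac{Vs^2}{2}\E e^{-sR_X}.
\]
Because \(R_X=BY_X\geq0\) by construction \eqref{eq:Rdef}, we have \(e^{-sR_X}\leq1\), hence \(M_{R_X}(-s)\leq1\) and \eqref{eq:left-cgf} follows.

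For the tail bound \eqref{eq:left-tail}, I would use Markov's inequality applied to \(e^{-sX}\):
\[
 \Pp(X\leq-x)\leq e^{-sx}\E e^{-sX}\leq\exp\Bigl(-sx+\frac{Vs^2}{2}\Bigr),
 \qquad s\geq0.
\]
Choosing \(s=x/V\) gives \(\exp(-x^2/(2V))\).

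For the final sharpness claim, I would argue by expansion at the origin. Suppose some \(v<V\) satisfied \(K_X(-s)\leq vs^2/2\) for all \(s\) in \([0,\varepsilon)\). Since \(X\) has finite variance and mean zero, \(\E e^{-sX}=1+Vs^2/2+o(s^2)\) as \(s\downarrow0\). Alternatively, dominated convergence gives \(M_{R_X}(-s)\to1\). Either way, \(K_X(-s)=Vs^2/2+o(s^2)\), which contradicts the supposed bound for small \(s\). Read as a two-sided statement, the same expansion shows the proxy cannot be decreased on any neighborhood of \(0\).

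The only mildly delicate point is justifying finiteness at negative arguments, since the theorem is stated for \(t\) at which \(K_X\) is finite. The elementary bound on \(e^{-sx}-1+sx\) handles this. The rest is routine.
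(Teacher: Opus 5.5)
Your proposal is correct and takes essentially the same route as the paper: apply the factorization at $t=-s$, use $R_X\ge0$ to get $M_{R_X}(-s)\le1$, apply Chernoff with $s=x/V$, and obtain local sharpness from the second-order behaviour at the origin. Your explicit check that $K_X(-s)<\infty$, and your dominated-convergence argument that $M_{R_X}(-s)\to1$, supply details the paper leaves implicit when it appeals to $K_X''(0)=V$.
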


\begin{proof}
Since \(R_X\geq0\), \(M_{R_X}(-s)\leq1\). Equation~\eqref{eq:factor} proves \eqref{eq:left-cgf}; Chernoff optimization over \(s\geq0\) gives \eqref{eq:left-tail}. Local minimality follows from \(K_X''(0)=V\).
\end{proof}

No scale parameter is needed for the left tail. On the right, the optimal scale depends on the growth of \(M_{R_X}(t)\), both near zero and up to its MGF boundary.

\section{The exact optimal scale}\label{sec:optimum}

\subsection{A variational formula}

For \(c\geq0\), write \(1/c=\infty\) when \(c=0\). We call \(c\) feasible if
\begin{equation}\label{eq:feasible}
 K_X(t)
 \leq
 \frac{Vt^2}{2(1-ct)}
 \qquad\text{for all }0\leq t<1/c.
\end{equation}
If \(K_X(t)=\infty\) at one of these arguments, the inequality fails. Define
\begin{equation}\label{eq:cstar}
 c_*(X)=\inf\{c\geq0:c\text{ is feasible}\},
\end{equation}
with the infimum of the empty set equal to \(\infty\).

By Theorem~\ref{thm:factor}, feasibility is equivalent to
\begin{equation}\label{eq:expdom}
 M_{R_X}(t)\leq\frac{1}{1-ct},
 \qquad 0\leq t<1/c.
\end{equation}
The right-hand side is the MGF of an exponential variable with mean \(c\).

For \(t>0\), set
\begin{equation}\label{eq:Psi}
 \Psi_X(t)
 =
 \frac{1-M_{R_X}(t)^{-1}}{t},
\end{equation}
using the convention \(1/\infty=0\).

\begin{theorem}[Variational characterization]\label{thm:variational}
For every centered spectrally positive infinitely divisible \(X\) with variance \(V\in(0,\infty)\),
\begin{equation}\label{eq:variational}
 \boxed{
 c_*(X)=\sup_{t>0}\Psi_X(t).
 }
\end{equation}
If the supremum is finite, then it is a feasible scale, so the infimum in \eqref{eq:cstar} is attained.
\end{theorem}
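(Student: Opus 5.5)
The plan is to turn feasibility, via \eqref{eq:expdom}, into a pointwise condition on $\Psi_X$, and then read off both the infimum and its attainment. Throughout I use that $R_X\ge0$, so $M_{R_X}(t)\ge1$ for $t>0$; in particular $\Psi_X(t)\in[0,1/t]$, with $\Psi_X(t)=1/t$ exactly when $M_{R_X}(t)=\infty$.

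\emph{Step 1 (pointwise reformulation).} Fix $c\ge0$ and $t>0$ with $ct<1$. The claim is
\[
 M_{R_X}(t)\le\frac{1}{1-ct}\iff\Psi_X(t)\le c.
\]
If $M_{R_X}(t)<\infty$, this is an algebraic rearrangement: $1-ct\le M_{R_X}(t)^{-1}$ is the same as $1-M_{R_X}(t)^{-1}\le ct$. If $M_{R_X}(t)=\infty$, the left inequality fails. On the right, $\Psi_X(t)=1/t>c$, so that fails too, and the equivalence holds with the convention $1/\infty=0$. At $t=0$ both sides of \eqref{eq:feasible} vanish. Hence $c$ is feasible if and only if $\Psi_X(t)\le c$ for all $t\in(0,1/c)$.

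\emph{Step 2 (removing the restriction $t<1/c$).} For $t\ge1/c$ the bound $\Psi_X(t)\le1/t\le c$ holds automatically. Combined with Step~1, this shows that $c$ is feasible if and only if $\Psi_X(t)\le c$ for all $t>0$, i.e.\ if and only if $c\ge S:=\sup_{t>0}\Psi_X(t)$. I expect this to be the only real subtlety: one has to note that the trivial bound $\Psi_X\le1/t$ covers the region beyond $1/c$, and that the case $c=0$ (where $1/c=\infty$) needs no separate treatment.

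\emph{Step 3 (conclusion).} The feasible set is $[S,\infty)\cap[0,\infty)$ when $S<\infty$, and empty when $S=\infty$. Since $S\ge0$, the infimum \eqref{eq:cstar} equals $S$ in both cases, using the convention $\inf\emptyset=\infty$. Moreover $S$ itself is feasible whenever it is finite, so the infimum is attained. This yields \eqref{eq:variational} together with the attainment statement.
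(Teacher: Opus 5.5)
Your proposal is correct and is essentially the paper's argument. You rearrange \eqref{eq:expdom} into \(\Psi_X(t)\le c\) on \((0,1/c)\), use \(M_{R_X}(t)=\infty\Rightarrow\Psi_X(t)=1/t>c\) to exclude infinite MGF values there, and dispose of \(t\ge1/c\) via \(\Psi_X(t)\le1/t\le c\); describing the feasible set exactly as \([S,\infty)\) gives the identity and attainment together, whereas the paper proves the two inequalities separately.
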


\begin{proof}
Suppose first that \(c\) is feasible. For \(0<t<1/c\), equation~\eqref{eq:expdom} rearranges to \(\Psi_X(t)\leq c\). If \(t\geq1/c\), then
\[
 0\leq\Psi_X(t)\leq\frac1t\leq c,
\]
because \(M_{R_X}(t)\geq1\), including the extended value \(+\infty\). Thus
\[
 \sup_{t>0}\Psi_X(t)\leq c.
\]
Taking the infimum over feasible \(c\) gives one inequality in \eqref{eq:variational}.

Conversely, let
\[
 C=\sup_{t>0}\Psi_X(t)<\infty.
\]
Fix \(0<t<1/C\), with the evident interpretation when \(C=0\). If \(M_{R_X}(t)=\infty\), then \(\Psi_X(t)=1/t>C\), a contradiction. Hence \(M_{R_X}(t)<\infty\), and
\[
 1-M_{R_X}(t)^{-1}\leq Ct.
\]
Because \(1-Ct>0\), this is equivalent to
\[
 M_{R_X}(t)\leq\frac1{1-Ct}.
\]
Thus \(C\) is feasible. This proves the reverse inequality and attainment.
\end{proof}

The variational scale gives the usual sub-gamma tail bound. If \(c_*(X)<\infty\), then for every \(u\geq0\),
\begin{equation}\label{eq:right-tail}
 \Pp\left(X\geq\sqrt{2Vu}+c_*(X)u\right)\leq e^{-u}.
\end{equation}
The remaining sections identify when this scale is controlled locally, at the MGF boundary, or at an interior point.

\subsection{Local cumulants and the MGF boundary}

Assume for the rest of this subsection that \(X\) has a nontrivial positive exponential moment. Then all positive moments of order at least two that occur below are finite, and \(K_X\) is analytic near zero. Comparing coefficients in \eqref{eq:factor} gives a useful identity.

\begin{proposition}[Cumulant moments]\label{prop:cumulants}
For every integer \(n\geq1\),
\begin{equation}\label{eq:cumulant-moment}
 \E R_X^n
 =
 \frac{2\kappa_{n+2}(X)}
 {(n+1)(n+2)V}.
\end{equation}
In particular,
\begin{equation}\label{eq:meanR}
 \E R_X=\frac{\kappa_3(X)}{3V}.
\end{equation}
\end{proposition}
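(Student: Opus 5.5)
The plan is to compare Taylor coefficients on both sides of the factorization \eqref{eq:factor} in Theorem~\ref{thm:factor}. Since \(X\) has a nontrivial positive exponential moment, there is some \(t_0>0\) with \(K_X(t_0)<\infty\). By Theorem~\ref{thm:factor}, \(M_{R_X}(t_0)<\infty\). Because \(R_X\geq0\), we also have \(M_{R_X}(t)\leq1\) for \(t\leq0\), so \(M_{R_X}\) is finite on \((-\infty,t_0]\).

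The first step is analyticity. A nonnegative variable with \(M_{R_X}(t_0)<\infty\) has all moments finite. Its MGF is analytic on \(|t|<t_0\), with expansion
\[
 M_{R_X}(t)=\sum_{n\geq0}\E R_X^n\,\frac{t^n}{n!}.
\]
This follows from dominated convergence, using \(e^{|t|R_X}\leq e^{t_0R_X}\) when \(|t|<t_0\). On the same interval \(K_X\) is finite, since \(K_X(t)\) is finite at \(t=t_0\) and for all \(t\leq0\). Hence \(K_X\) is analytic there, with cumulant expansion \(K_X(t)=\sum_{m\geq1}\kappa_m(X)t^m/m!\).

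The second step is to match coefficients of \(t^{n+2}\) in \(K_X(t)=\tfrac{V}{2}t^2M_{R_X}(t)\). On the left the coefficient is \(\kappa_{n+2}(X)/(n+2)!\). On the right it is \(\tfrac{V}{2}\E R_X^n/n!\). Equating the two gives
\[
 \E R_X^n=\frac{2\,n!\,\kappa_{n+2}(X)}{V\,(n+2)!}=\frac{2\kappa_{n+2}(X)}{(n+1)(n+2)V}.
\]
This is \eqref{eq:cumulant-moment}. Taking \(n=1\) gives \eqref{eq:meanR}.

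As a consistency check, matching the coefficients of \(t^0\), \(t^1\) and \(t^2\) gives \(\kappa_1=0\), \(\kappa_2=V\) and \(\E R_X^0=1\), which agree with centering and the definition of \(V\).

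The only delicate point is justifying that both sides are genuine convergent power series on a common interval around \(0\), rather than formal expansions. The uniqueness of power series coefficients then applies. This is settled by the two-sided finiteness just noted: the right half comes from \(t_0\) together with Theorem~\ref{thm:factor}, and the left half holds automatically.

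An alternative route avoids analyticity and works directly from Proposition~\ref{prop:Rlaw}. Using \(\E B^n=2/((n+1)(n+2))\), one has
\[
 \E R_X^n=\E B^n\,\E Y_X^n=\frac{2}{(n+1)(n+2)V}\int x^{n+2}\,\nu(dx).
\]
The integral \(\int x^{n+2}\,\nu(dx)\) equals \(\kappa_{n+2}(X)\) for \(n\geq1\), by differentiating \eqref{eq:LK}.
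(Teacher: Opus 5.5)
Your proposal is correct and follows the paper's proof: both expand the factorization \eqref{eq:factor} around zero and equate the coefficients of \(t^{n+2}\). Your argument that both sides are convergent power series on \(|t|<t_0\) supplies the analyticity the paper asserts in one line, and your alternative route via \(\E R_X^n=\E B^n\,\E Y_X^n\) and \(\kappa_{n+2}(X)=\int x^{n+2}\,\nu(dx)\) is also valid.
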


\begin{proof}
Expanding both sides of \eqref{eq:factor} around zero gives
\[
 \sum_{k=2}^{\infty}\frac{\kappa_k(X)}{k!}t^k
 =
 \frac{Vt^2}{2}
 \sum_{n=0}^{\infty}\frac{\E R_X^n}{n!}t^n.
\]
Equating the coefficient of \(t^{n+2}\) proves \eqref{eq:cumulant-moment}.
\end{proof}

Since
\[
 \lim_{t\downarrow0}\Psi_X(t)=\E R_X,
\]
Theorem~\ref{thm:variational} implies the local obstruction
\begin{equation}\label{eq:local-lb}
 c_*(X)\geq\frac{\kappa_3(X)}{3V}.
\end{equation}

There is also a global obstruction. Let
\begin{equation}\label{eq:tau}
 \tau_X=\sup\{t>0:M_{R_X}(t)<\infty\}\in(0,\infty].
\end{equation}
By Theorem~\ref{thm:factor}, this is also the positive abscissa of convergence of \(K_X\).

\begin{proposition}[Boundary obstruction]\label{prop:boundary}
If \(\tau_X<\infty\), then
\begin{equation}\label{eq:boundary-lb}
 c_*(X)\geq\frac1{\tau_X}.
\end{equation}
Consequently,
\begin{equation}\label{eq:combined-lb}
 c_*(X)\geq
 \max\left\{
 \frac{\kappa_3(X)}{3V},
 \frac1{\tau_X}
 \right\},
\end{equation}
where \(1/\infty=0\).
\end{proposition}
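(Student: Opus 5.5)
The plan is to use the variational formula of Theorem~\ref{thm:variational} together with the fact that \(\Psi_X(t)=1/t\) whenever \(M_{R_X}(t)=\infty\), under the convention \(1/\infty=0\). Assume \(\tau_X<\infty\). For every \(t>\tau_X\), the definition \eqref{eq:tau} and the monotonicity of \(t\mapsto M_{R_X}(t)\) force \(M_{R_X}(t)=\infty\). Monotonicity holds because \(R_X\geq0\), so \(e^{tR_X}\) is nondecreasing in \(t\). Hence \(\Psi_X(t)=1/t\) for all \(t>\tau_X\), and Theorem~\ref{thm:variational} gives
\[
 c_*(X)\geq\sup_{t>\tau_X}\frac1t=\frac1{\tau_X}.
\]

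Should one want to avoid taking \(t\) strictly beyond \(\tau_X\), an alternative is to let \(t\uparrow\tau_X\). If \(M_{R_X}(t)\to\infty\) as \(t\uparrow\tau_X\), then \(\Psi_X(t)\to1/\tau_X\) and the same bound follows. If instead \(M_{R_X}(\tau_X)<\infty\), the values \(t>\tau_X\) already suffice. Either route is immediate, so the first argument is enough.

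For \eqref{eq:combined-lb}, I would combine \eqref{eq:boundary-lb} with the local obstruction \eqref{eq:local-lb}, which holds under the standing assumption of a positive exponential moment. When \(\tau_X=\infty\), the convention \(1/\infty=0\) makes the second entry of the maximum trivial, since \(c_*(X)\geq0\).

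There is no real obstacle here. The only point needing care is the extended-value convention in \eqref{eq:Psi}, which gives \(\Psi_X(t)=1/t\) exactly when the MGF is infinite. The bound is therefore almost a restatement of the first half of the proof of Theorem~\ref{thm:variational}.
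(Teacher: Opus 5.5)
Your proof is correct and uses the same observation as the paper's: \(M_{R_X}(t)=\infty\) for every \(t>\tau_X\). The paper argues directly from feasibility (for \(c<1/\tau_X\), any \(t\in(\tau_X,1/c)\) violates \eqref{eq:expdom}), while you read the same fact off the variational formula via \(\Psi_X(t)=1/t\) beyond \(\tau_X\); the combination with \eqref{eq:local-lb} is identical.
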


\begin{proof}
If \(c<1/\tau_X\), then \(1/c>\tau_X\). An argument \(t\in(\tau_X,1/c)\) has \(M_{R_X}(t)=\infty\), so \eqref{eq:expdom} fails. This proves \eqref{eq:boundary-lb}; combine it with \eqref{eq:local-lb}.
\end{proof}

Neither lower bound in \eqref{eq:combined-lb} is universally exact. Section~\ref{sec:examples} gives cases where the optimum is controlled at the origin, at the boundary, or at an interior point.

\section{M-class optimality and equality laws}\label{sec:mclass}

\subsection{The local lower bound}

\begin{definition}[M-class]\label{def:Mclass}
Let \(R\) be nonnegative with mean \(m\in(0,\infty)\) and a positive exponential moment. Following \citet{KlarMuller2003}, \(R\) belongs to the M-class, written \(R\in\Mclass\), if
\begin{equation}\label{eq:Mclass}
 M_R(t)\leq\frac1{1-mt},
 \qquad 0\leq t<1/m.
\end{equation}
\end{definition}

\begin{theorem}[Local optimality]\label{thm:Mclass}
Suppose that \(X\) has a positive exponential moment and is not purely Gaussian. Then
\begin{equation}\label{eq:Mclass-equivalence}
 \boxed{
 c_*(X)=\frac{\kappa_3(X)}{3V}
 \quad\Longleftrightarrow\quad
 R_X\in\Mclass.
 }
\end{equation}
\end{theorem}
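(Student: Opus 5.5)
Write \(m=\E R_X\). The plan is to reduce the equivalence to Theorem~\ref{thm:variational}, the moment identity \eqref{eq:meanR}, and Definition~\ref{def:Mclass}. The only preliminary work is checking that \(R_X\) satisfies the hypotheses of the M-class definition.

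First, since \(X\) is not purely Gaussian, \(\nu\neq0\). By Proposition~\ref{prop:Rlaw}, \(R_X\) then has positive mass on \((0,\infty)\), so \(m>0\). Since \(X\) has a positive exponential moment, \(K_X(t_0)<\infty\) for some \(t_0>0\). Theorem~\ref{thm:factor} then gives \(M_{R_X}(t_0)<\infty\). Hence \(R_X\) has a positive exponential moment, \(m<\infty\), and by \eqref{eq:meanR} we have \(m=\kappa_3(X)/(3V)\in(0,\infty)\). So the statement \(R_X\in\Mclass\) is meaningful, and it says precisely that
\[
 M_{R_X}(t)\leq\frac{1}{1-mt},\qquad 0\leq t<1/m .
\]

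For the direction \(R_X\in\Mclass\Rightarrow c_*(X)=m\), observe that the displayed inequality is exactly condition \eqref{eq:expdom} with \(c=m\). Thus \(m\) is feasible, and \(c_*(X)\leq m\). The local lower bound \eqref{eq:local-lb} gives \(c_*(X)\geq m\), so equality holds.

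For the converse, assume \(c_*(X)=m<\infty\). By the attainment part of Theorem~\ref{thm:variational}, the finite supremum \(m\) is itself a feasible scale. Feasibility of \(c=m\) is \eqref{eq:expdom} with \(c=m\), which is the M-class inequality. Hence \(R_X\in\Mclass\).

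There is no serious obstacle. The only delicate point is the preliminary check that \(0<m<\infty\) and that \(R_X\) has a positive exponential moment. The non-Gaussian hypothesis is needed to exclude \(m=0\): if \(m=0\), the M-class is undefined, and \(c_*=0\) holds trivially.
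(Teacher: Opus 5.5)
Your proposal is correct and follows the paper's proof essentially step for step: \(m=\E R_X>0\) from the non-Gaussian hypothesis, \(c_*(X)\geq m\) from \eqref{eq:local-lb}, and equality holding exactly when \(m\) is feasible (using the attainment part of Theorem~\ref{thm:variational}), which by \eqref{eq:expdom} is the M-class inequality. You also check explicitly, via the finiteness equivalence in Theorem~\ref{thm:factor}, that \(R_X\) has a positive exponential moment so that Definition~\ref{def:Mclass} applies; the paper leaves this detail implicit.
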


\begin{proof}
Put
\[
 m=\E R_X=\frac{\kappa_3(X)}{3V}.
\]
The non-Gaussian assumption and spectral positivity imply \(m>0\). Equation~\eqref{eq:local-lb} gives \(c_*(X)\geq m\). Equality holds if and only if \(m\) is feasible. By \eqref{eq:expdom}, feasibility of \(m\) is exactly \eqref{eq:Mclass}.
\end{proof}

Theorem~\ref{thm:Mclass} identifies when the local Taylor obstruction is globally sharp: this occurs precisely when \(R_X\) belongs to the M-class.

\subsection{MGF order versus coefficientwise bounds}

A familiar sufficient condition for \eqref{eq:feasible} is the coefficientwise Bernstein bound
\begin{equation}\label{eq:bernstein-cumulants}
 \kappa_k(X)
 \leq
 \frac{k!}{2}V c^{k-2},
 \qquad k\geq3.
\end{equation}
For the present class all cumulants of order at least three are nonnegative. Proposition~\ref{prop:cumulants} shows that \eqref{eq:bernstein-cumulants} is equivalent to
\begin{equation}\label{eq:moment-order}
 \E R_X^n\leq n!c^n,
 \qquad n\geq1.
\end{equation}
Thus the coefficientwise argument requires the moments of \(R_X\) to be bounded by the corresponding moments of an exponential variable of mean \(c\). The exact condition \eqref{eq:expdom} asks only for the MGF inequality on its relevant interval.

Moment order is strictly stronger than MGF order for general nonnegative laws; \citet{KlarMuller2003} give an explicit finite-support example. Theorem~\ref{thm:variational} works with the MGF order itself and therefore avoids the stronger coefficientwise assumption.

\subsection{Exact equality in the standard envelope}

The same factorization determines every law that attains equality in \eqref{eq:feasible}.

\begin{theorem}[Equality extremizers]\label{thm:extremizer}
Suppose that, for some \(c\geq0\) and \(\varepsilon>0\),
\begin{equation}\label{eq:envelope-equality}
 K_X(t)=\frac{Vt^2}{2(1-ct)}
 \qquad\text{for }0<t<\varepsilon.
\end{equation}
Then the following alternatives hold.
\begin{enumerate}
\item If \(c=0\), then \(X\sim N(0,V)\).
\item If \(c>0\), then \(X\) is a centered compound Poisson variable with Lévy measure
\begin{equation}\label{eq:extreme-levy}
 \nu(dx)=\frac{V}{2c^3}e^{-x/c}\,dx,
 \qquad x>0.
\end{equation}
Equivalently,
\begin{equation}\label{eq:extreme-cp}
 X\stackrel d=
 \sum_{i=1}^{N}J_i-\E\sum_{i=1}^{N}J_i,
 \quad
 N\sim\operatorname{Poisson}\left(\frac{V}{2c^2}\right),
 \quad
 J_i\sim\Exp(\text{mean }c).
\end{equation}
\end{enumerate}
Conversely, the Gaussian law in the first case and the compound Poisson laws in the second case satisfy \eqref{eq:envelope-equality} throughout their positive MGF domains.
\end{theorem}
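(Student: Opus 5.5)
By Theorem~\ref{thm:factor}, the hypothesis \eqref{eq:envelope-equality} can be rewritten as an identity of MGFs:
\[
 M_{R_X}(t)=\frac{1}{1-ct},\qquad 0<t<\varepsilon.
\]
The plan is to identify the law of \(R_X\) by uniqueness of MGFs. Then we recover the canonical measure \(H\), and hence \(a\) and \(\nu\), by inverting the transform \(R_X=BY_X\) through the explicit survival function of Proposition~\ref{prop:Rlaw}.

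\emph{Step 1: identify \(R_X\).} Since \(R_X\ge0\) and \(M_{R_X}\) is finite on \((0,\varepsilon)\), it is finite on \((-\infty,\varepsilon)\). It is analytic on the strip \(\operatorname{Re} z<\varepsilon\). The right-hand side \(1/(1-cz)\) is analytic there as well, because its pole is at \(1/c\ge\varepsilon\); if it were not, the identity would force a blow-up. Analytic continuation therefore gives equality on \(\operatorname{Re} z<\varepsilon\). By the standard uniqueness theorem for MGFs finite on an open interval, \(R_X\equiv0\) if \(c=0\), and \(R_X\sim\Exp(\text{mean }c)\) if \(c>0\).

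\emph{Step 2: recover \(X\).} If \(c=0\), then \(\Pp(R_X=0)=1\), so by \eqref{eq:Ratom} we have \(a=V\). Then \eqref{eq:variance} forces \(\int x^2\nu(dx)=0\), so \(\nu=0\) and \(X\sim N(0,V)\).

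If \(c>0\), the atom at zero vanishes, so \(a=0\), and \eqref{eq:Rsurvival} gives
\[
 \frac1V\int_{(r,\infty)}(x-r)^2\,\nu(dx)=e^{-r/c},\qquad r\ge0.
\]
The main step is to invert this relation uniquely. The function \(S(r)=\int_{(r,\infty)}(x-r)^2\nu(dx)\) is convex and differentiable. Its derivatives are
\[
 S'(r)=-2\int_{(r,\infty)}(x-r)\,\nu(dx),
\]
which is \(-Vq_X(r)\) by \eqref{eq:Rdensity}, and
\[
 S''(r)=2\,\nu((r,\infty))
\]
at least as a right derivative, since \(q_X\) is absolutely continuous with right derivative \(-(2/V)\nu((r,\infty))\). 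Differentiating \(Ve^{-r/c}\) twice gives
\[
 \nu((r,\infty))=\frac{V}{2c^2}e^{-r/c}.
\]
This is the tail of the measure \eqref{eq:extreme-levy}. Tail functions determine measures on \((0,\infty)\), so \(\nu\) is as claimed. Its total mass \(V/(2c^2)\) is finite, which gives the compound Poisson representation \eqref{eq:extreme-cp} with normalized jump law \(\Exp(\text{mean }c)\). Centering follows from the form of \eqref{eq:LK}.

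The delicate point is justifying the differentiation under the integral. Rather than differentiate, I would verify directly by Tonelli that \(\int_r^\infty \nu((s,\infty))\,ds=\int_{(r,\infty)}(x-r)\nu(dx)\). From this, \(S\) and its derivatives are iterated integrals of the tail. Uniqueness then follows because the tail is right-continuous and is determined by the second derivative of the given smooth function.

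\emph{Converse.} For the Gaussian law, \(K_X(t)=Vt^2/2\) for every \(t\). For the compound Poisson law \eqref{eq:extreme-levy}, direct integration in \eqref{eq:LK} for \(0<t<1/c\) gives
\[
 \frac{V}{2c^3}\int_0^\infty\bigl(e^{tx}-1-tx\bigr)e^{-x/c}\,dx
 =\frac{V}{2c^2}\Bigl(\frac{1}{1-ct}-1-ct\Bigr)
 =\frac{Vt^2}{2(1-ct)}.
\]
This holds on the whole positive MGF domain \([0,1/c)\). Alternatively, one can check that \(R_X\sim\Exp(c)\) via Proposition~\ref{prop:Rlaw} and apply \eqref{eq:factor}.
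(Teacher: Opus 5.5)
Your proof is correct and follows the same route as the paper. The factorization turns the hypothesis into $M_{R_X}(t)=1/(1-ct)$, MGF uniqueness identifies $R_X$ as $\delta_0$ or $\Exp(\text{mean }c)$, and the canonical transform is then inverted to recover $a$ and $\nu$. The differences are only technical: you get uniqueness from the one-sided interval $(0,\varepsilon)$ by analytic continuation, and you recover the tail $\nu((r,\infty))$ as a right derivative of $Q(r)=\int_r^\infty\nu((s,\infty))\,ds$ using the everywhere-defined survival function, whereas the paper proves $Q''=\nu$ distributionally by testing against $\varphi\in C_c^\infty(0,\infty)$.
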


\begin{proof}
Equations~\eqref{eq:factor} and \eqref{eq:envelope-equality} imply
\begin{equation}\label{eq:Rexp-mgf}
 M_{R_X}(t)=\frac1{1-ct}
 \qquad (0<t<\varepsilon).
\end{equation}

If \(c=0\), then \(M_{R_X}(t)=1\) near zero. Since \(R_X\geq0\), this forces \(R_X=0\) almost surely. The variable \(B\) is strictly positive almost surely, so \(Y_X=0\) almost surely. Hence \(H=V\delta_0\), which means \(a=V\) and \(\nu=0\). The centered law is \(N(0,V)\).

Let \(c>0\). MGF uniqueness in a neighborhood of zero gives
\[
 R_X\sim\Exp(\text{mean }c).
\]
In particular, \(R_X\) has no atom at zero. By \eqref{eq:Ratom}, \(a=0\). Its density is
\[
 q_X(r)=\frac1c e^{-r/c},
 \qquad r>0.
\]
Define, for \(r>0\),
\[
 Q(r)=\int_{(r,\infty)}(x-r)\,\nu(dx).
\]
Equation~\eqref{eq:Rdensity} gives
\begin{equation}\label{eq:Qexp}
 Q(r)=\frac{V}{2c}e^{-r/c}.
\end{equation}
We now recover \(\nu\) without assuming it has a density. Let \(\varphi\in C_c^\infty(0,\infty)\). Fubini's theorem and two integrations by parts give
\begin{align*}
 \int_0^\infty Q(r)\varphi''(r)\,dr
 &=
 \int_{(0,\infty)}
 \left\{\int_0^x(x-r)\varphi''(r)\,dr\right\}\nu(dx)\\
 &=
 \int_{(0,\infty)}\varphi(x)\,\nu(dx).
\end{align*}
The boundary terms at zero vanish because \(\varphi\) is compactly supported inside \((0,\infty)\). Thus \(Q''=\nu\) in the sense of distributions on \((0,\infty)\). Differentiating the smooth right-hand side of \eqref{eq:Qexp} twice yields
\[
 \nu(dx)=\frac{V}{2c^3}e^{-x/c}\,dx,
\]
which proves \eqref{eq:extreme-levy}.

The total mass of this Lévy measure is
\[
 \lambda=\nu((0,\infty))=\frac{V}{2c^2}.
\]
After normalization, its jump density is \(c^{-1}e^{-x/c}\), proving \eqref{eq:extreme-cp}. The centering term is determined by \(\E X=0\).

Conversely, for \eqref{eq:extreme-cp},
\begin{align*}
 K_X(t)
 &=
 \frac{V}{2c^2}
 \left\{
 \frac1{1-ct}-1-ct
 \right\}\\
 &=
 \frac{Vt^2}{2(1-ct)}.
\end{align*}
The Gaussian case is immediate.
\end{proof}

\begin{remark}\label{rem:extremizer}
Because equality on any open interval adjoining zero determines the MGF, the theorem characterizes the underlying law.
\end{remark}

\section{Structural operations}\label{sec:structure}

Convolution and scaling act directly on the normalized canonical measure, leading to the following properties of \(c_*\).

\begin{proposition}[Independent sums]\label{prop:sums}
Let \(X_1\) and \(X_2\) be independent centered spectrally positive infinitely divisible variables with variances \(V_1,V_2\in(0,\infty)\). Then
\begin{equation}\label{eq:Rmixture}
 \Law(R_{X_1+X_2})
 =
 \frac{V_1}{V_1+V_2}\Law(R_{X_1})
 +
 \frac{V_2}{V_1+V_2}\Law(R_{X_2}).
\end{equation}
Consequently,
\begin{equation}\label{eq:c-sum}
 c_*(X_1+X_2)
 \leq
 \max\{c_*(X_1),c_*(X_2)\}.
\end{equation}
\end{proposition}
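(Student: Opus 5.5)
The plan is to work at the level of the Kolmogorov canonical measure and then pass to the variational formula of Theorem~\ref{thm:variational}. The sum \(X=X_1+X_2\) of independent centered spectrally positive infinitely divisible variables is again of the same type. Its cumulant function is \(K_X=K_{X_1}+K_{X_2}\). By uniqueness of the Lévy--Khintchine triplet, its Gaussian component is \(a_1+a_2\) and its Lévy measure is \(\nu_1+\nu_2\). Hence \(H_X=H_{X_1}+H_{X_2}\), with total mass \(V=V_1+V_2\). Normalizing, \(H_X/V\) is the mixture of \(H_{X_i}/V_i\) with weights \(V_i/V\). Therefore \(Y_X\) has the corresponding mixture law. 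Multiplying by the independent \(B\sim\operatorname{Beta}(1,2)\) preserves mixtures, which gives \eqref{eq:Rmixture}.

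As a cross-check I would also read the mixture off Theorem~\ref{thm:factor}. For \(t>0\) in the common domain,
\[
 \frac{Vt^2}{2}M_{R_X}(t)=\frac{V_1t^2}{2}M_{R_{X_1}}(t)+\frac{V_2t^2}{2}M_{R_{X_2}}(t).
\]
This holds in the extended sense for all \(t>0\), since all terms are nonnegative. It gives \(M_{R_X}=\frac{V_1}{V}M_{R_{X_1}}+\frac{V_2}{V}M_{R_{X_2}}\) on \((0,\infty)\), with values in \([1,\infty]\).

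For \eqref{eq:c-sum}, put \(c=\max\{c_*(X_1),c_*(X_2)\}\). If \(c=\infty\) there is nothing to prove. Otherwise Theorem~\ref{thm:variational} says each \(c_*(X_i)\) is feasible. Feasibility is monotone in \(c\): if \(c'\le c\) and \(M\le 1/(1-c't)\) on \([0,1/c')\), then on the smaller interval \([0,1/c)\) we have \(1/(1-c't)\le 1/(1-ct)\). So both \(X_1\) and \(X_2\) satisfy \eqref{eq:expdom} with this common \(c\), that is,
\[
 M_{R_{X_i}}(t)\le\frac{1}{1-ct},\qquad 0\le t<1/c.
\]
Averaging with weights \(V_i/V\) and using the mixture identity gives the same bound for \(M_{R_X}\). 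Thus \(c\) is feasible for \(X\), and \(c_*(X)\le c\) by definition \eqref{eq:cstar}.

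The only delicate point is justifying that the canonical measures add. This needs uniqueness of the Lévy--Khintchine representation, applied to the characteristic functions. Working only with \(K_X\) on a possibly trivial positive domain would not suffice. I expect this step to be routine given \citet{Sato1999}. The mixture statement is then exact, and the remaining inequality is a convexity, or averaging, argument in the MGF order.
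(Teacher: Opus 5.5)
Your proof is correct and is essentially the paper's argument. Additivity of the canonical measures under convolution gives the mixture for \(Y_X\), hence for \(R_X=BY_X\), and averaging the two exponential MGF bounds with weights \(V_i/V\) yields \eqref{eq:c-sum}. You also make explicit two points the paper's proof leaves implicit: each \(c_*(X_i)\) is itself feasible by the attainment clause of Theorem~\ref{thm:variational}, and feasibility is monotone in \(c\), so the common value \(c=\max\{c_*(X_1),c_*(X_2)\}\) is feasible for both summands.
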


\begin{proof}
The canonical measures add under independent convolution, and their total masses are the variances. Normalizing the sum gives the mixture in \eqref{eq:Rmixture}. Let \(c=\max\{c_*(X_1),c_*(X_2)\}\), first assuming both scales are finite. For \(0\leq t<1/c\), the MGF of the mixture is bounded by the same mixture of two copies of \((1-ct)^{-1}\), hence by \((1-ct)^{-1}\). The result follows from \eqref{eq:expdom}. The assertion is trivial if one of the scales is infinite.
\end{proof}

The inequality in \eqref{eq:c-sum} need not be an equality. In particular, adding an independent Gaussian component puts additional mass at zero in the transformed law and may reduce the optimal scale.

\begin{proposition}[Scaling and Lévy time]\label{prop:scale-time}
For \(b>0\),
\begin{equation}\label{eq:scaling}
 c_*(bX)=b\,c_*(X).
\end{equation}
Let \((L_t)_{t\geq0}\) be a spectrally positive Lévy process with finite nonzero variance at time one, and set \(X_t=L_t-\E L_t\). Then, for every \(t>0\),
\begin{equation}\label{eq:levy-time}
 c_*(X_t)=c_*(X_1).
\end{equation}
\end{proposition}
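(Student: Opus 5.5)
The plan is to compute how the transform \(R_X\) behaves under the two operations and then read off \(c_*\) from the variational formula of Theorem~\ref{thm:variational}, or equivalently from the feasibility condition \eqref{eq:expdom}.

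\emph{Scaling.} The variable \(bX\) is again centered, spectrally positive and infinitely divisible, with variance \(b^2V\). Its Gaussian component is \(b^2a\), and its Lévy measure is the image \(\nu\circ(x\mapsto bx)^{-1}\). Hence its canonical measure is the pushforward of \(b^2H\) under \(x\mapsto bx\), and after normalization \(Y_{bX}\stackrel d= bY_X\). With \(B\) independent this gives
\[
R_{bX}\stackrel d= bR_X .
\]
A quicker route avoids the Lévy measure altogether. By Theorem~\ref{thm:factor},
\[
K_{bX}(t)=K_X(bt)=\frac{b^2Vt^2}{2}M_{R_X}(bt),
\]
so \(M_{R_{bX}}(t)=M_{R_X}(bt)\) for all \(t>0\), including infinite values. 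Substituting into \eqref{eq:Psi} gives \(\Psi_{bX}(t)=b\,\Psi_X(bt)\). Taking the supremum over \(t>0\), which is invariant under the substitution \(s=bt\), yields \(c_*(bX)=b\,c_*(X)\), including the case where both sides are infinite.

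\emph{Lévy time.} The process \(L\) has characteristic triplet such that \(X_t\) has Gaussian component \(ta\) and Lévy measure \(t\nu\), where \((a,\nu)\) are those of \(X_1\). Both are supported as required, so \(X_t\) is centered and spectrally positive, with \(K_{X_t}=tK_{X_1}\) wherever finite and \(\Var(X_t)=tV\in(0,\infty)\). The key observation is that the canonical measure of \(X_t\) is \(tH\). Normalizing, \(Y_{X_t}\sim H/V\), so \(R_{X_t}\stackrel d= R_{X_1}\). The same conclusion follows directly from the factorization:
\[
tK_{X_1}(s)=\frac{tVs^2}{2}M_{R_{X_1}}(s),
\]
which identifies \(M_{R_{X_t}}=M_{R_{X_1}}\) on \(s>0\), with finiteness matched by Theorem~\ref{thm:factor}. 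Then \(\Psi_{X_t}=\Psi_{X_1}\), and Theorem~\ref{thm:variational} gives \(c_*(X_t)=c_*(X_1)\).

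\emph{Main obstacle.} The only point needing care is the bookkeeping of infinite MGF values and of \(t\) outside the domain. To handle it, I would work directly with the identity of extended-valued MGFs on \((0,\infty)\), which follows from the equivalence of finiteness in Theorem~\ref{thm:factor}. An alternative is to use the distributional identities for \(R\), which settle this automatically. I would also verify that the cumulant generating function of \(X_t\) equals \(t\) times that of \(X_1\) on the whole extended positive half-line, which is standard for Lévy processes.
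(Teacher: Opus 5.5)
Your proposal is correct and follows essentially the paper's proof: you obtain $R_{bX}\stackrel d= bR_X$ and $R_{X_t}\stackrel d= R_{X_1}$ from the canonical measure (pushforward under $x\mapsto bx$, respectively multiplication by $t$), then read off $c_*$ from the variational formula with the substitution $s=bt$. Your second route, via $K_{bX}(t)=K_X(bt)$ with finiteness matched by Theorem~\ref{thm:factor}, is a valid shortcut that handles the infinite MGF values directly.
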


\begin{proof}
Under multiplication by \(b\), the canonical construction gives
\[
 R_{bX}\stackrel d=bR_X.
\]
Substitution in \eqref{eq:variational}, followed by the change of variable \(s=bt\), proves \eqref{eq:scaling}. For the Lévy process, the canonical measure and the variance at time \(t\) are both multiplied by \(t\). Their normalized ratio is unchanged, so
\[
 R_{X_t}\stackrel d=R_{X_1}.
\]
Equation~\eqref{eq:variational} proves \eqref{eq:levy-time}.
\end{proof}

Scaling \(X\) by \(b\) scales \(c_*\) by \(b\), whereas Lévy-process time leaves \(c_*\) unchanged because the canonical measure and the variance grow by the same factor.

\section{Compound Poisson laws and equilibrium distributions}\label{sec:cp}

Let
\begin{equation}\label{eq:CP}
 X=\sum_{i=1}^{N}J_i-\lambda\E J,
 \qquad
 N\sim\operatorname{Poisson}(\lambda),
\end{equation}
where \(J_1,J_2,\ldots\) are independent copies of a nonnegative variable \(J\), independent of \(N\), and
\[
 0<\E J^2<\infty.
\]
Then \(a=0\), \(\nu=\lambda\Law(J)\), and
\begin{equation}\label{eq:CPvariance}
 V=\lambda\E J^2.
\end{equation}
Proposition~\ref{prop:Rlaw} becomes
\begin{align}
 q_X(r)
 &=
 \frac{2\E(J-r)_+}{\E J^2},
 \label{eq:CPdensity}\\
 \Pp(R_X>r)
 &=
 \frac{\E(J-r)_+^2}{\E J^2}.
 \label{eq:CPsurvival}
\end{align}

For a nonnegative \(J\) with finite second moment, its second-order equilibrium distribution, denoted by \(J^{e2}\), is the law with density
\begin{equation}\label{eq:e2}
 f_{J^{e2}}(r)
 =
 \frac{2\E(J-r)_+}{\E J^2},
 \qquad r\geq0.
\end{equation}
This is one of the standard equivalent definitions of higher-order equilibrium laws \citep{NandaJainSingh1996,WillmotDrekicCai2005,NairPreeth2009}.

\begin{proposition}[Equilibrium representation]\label{prop:e2}
For the centered compound Poisson law \eqref{eq:CP},
\begin{equation}\label{eq:R-e2}
 R_X\stackrel d=J^{e2}.
\end{equation}
Consequently, \(c_*(X)\) depends on the jump law but not on the intensity \(\lambda\).
\end{proposition}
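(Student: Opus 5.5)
The plan is to read off the law of \(R_X\) directly from Proposition~\ref{prop:Rlaw}, specialized to the compound Poisson setting. For the law \eqref{eq:CP} the Gaussian component vanishes, \(a=0\), and the Lévy measure is \(\nu=\lambda\Law(J)\). First I will check that this is the correct Lévy--Khintchine data. The cumulant function of \(\sum_{i\le N}J_i\) is \(\lambda(\E e^{tJ}-1)\). Subtracting \(\lambda t\E J\) gives \(K_X(t)=\lambda\E(e^{tJ}-1-tJ)=\int(e^{tx}-1-tx)\,\nu(dx)\), which matches \eqref{eq:LK}. Atoms of \(J\) at zero contribute nothing to this integrand, so \(\nu\) may be restricted to \((0,\infty)\). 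Then \eqref{eq:variance} gives \(V=\lambda\E J^2\), which is \eqref{eq:CPvariance}, and this is finite and positive by assumption.

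Second, by \eqref{eq:Ratom} the atom of \(R_X\) at zero has mass \(a/V=0\), so \(R_X\) is absolutely continuous on \((0,\infty)\). Substituting into \eqref{eq:Rdensity} gives
\[
q_X(r)=\frac{2\lambda}{\lambda\E J^2}\E\bigl[(J-r)\1_{\{J>r\}}\bigr]=\frac{2\E(J-r)_+}{\E J^2},
\]
which is \eqref{eq:CPdensity}. This coincides with the density \eqref{eq:e2} of \(J^{e2}\) for \(r>0\). Since both laws put no mass at zero and have the same density on \((0,\infty)\), they are equal, which proves \eqref{eq:R-e2}. As a consistency check, \(\int_0^\infty 2\E(J-r)_+\,dr=\E J^2\) by Tonelli, so \eqref{eq:e2} is indeed a probability density. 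The survival formula \eqref{eq:CPsurvival} follows in the same way from \eqref{eq:Rsurvival}.

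Finally, \(\lambda\) has cancelled, so \(\Law(R_X)\) depends only on \(\Law(J)\). Theorem~\ref{thm:variational} expresses \(c_*(X)\) as \(\sup_{t>0}\Psi_X(t)\), and \(\Psi_X\) depends only on \(M_{R_X}\). Hence \(c_*(X)\) does not depend on the intensity \(\lambda\). This can also be seen as a special case of \eqref{eq:levy-time}, since the intensity plays the role of Lévy time.

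No step is a real obstacle. The only point needing care is the bookkeeping of a possible atom of \(J\) at zero. Such an atom changes neither \(\nu\) on \((0,\infty)\) nor \(\E J^2\) nor \(\E(J-r)_+\). So the intensity can either be taken as \(\lambda\) with \(\Law(J)\) including the atom, or reduced to \(\lambda\Pp(J>0)\) with \(J\) conditioned to be positive, and the formulas above hold in both descriptions.
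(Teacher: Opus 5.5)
Your proposal is correct and follows the paper's own argument: specialize Proposition~\ref{prop:Rlaw} with \(a=0\), \(\nu=\lambda\Law(J)\) and \(V=\lambda\E J^2\) to obtain \eqref{eq:CPdensity}, match it with \eqref{eq:e2}, and note that \(\lambda\) cancels, so the variational formula does not involve it. Your additional checks are all sound and make explicit what the paper leaves implicit: the vanishing atom of \(R_X\) at zero, the normalization of \eqref{eq:e2}, the treatment of a possible atom of \(J\) at zero, and the interpretation of the intensity as Lévy time via \eqref{eq:levy-time}.
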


\begin{proof}
Equation~\eqref{eq:R-e2} follows by comparing \eqref{eq:CPdensity} with \eqref{eq:e2}. The variational formula then contains no \(\lambda\).
\end{proof}

If \(J\) has a positive exponential moment, then
\begin{equation}\label{eq:e2mean}
 \E J^{e2}=\frac{\E J^3}{3\E J^2}.
\end{equation}
Theorem~\ref{thm:Mclass} gives the following exact criterion.

\begin{corollary}[Compound Poisson M-class criterion]\label{cor:CPM}
Under a positive exponential moment,
\begin{equation}\label{eq:CPM}
 c_*(X)=\frac{\E J^3}{3\E J^2}
 \quad\Longleftrightarrow\quad
 J^{e2}\in\Mclass.
\end{equation}
\end{corollary}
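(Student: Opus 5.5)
The plan is to apply Theorem~\ref{thm:Mclass} directly, after two checks: its hypotheses hold in the compound Poisson setting, and its quantities become the ones in \eqref{eq:CPM}.

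\emph{Hypotheses.} By \eqref{eq:CP} we have \(a=0\) and \(\nu=\lambda\Law(J)\). The condition \(\E J^2>0\) forces \(\nu\neq0\), so \(X\) is not purely Gaussian. The positive exponential moment assumption is read as \(\E e^{sJ}<\infty\) for some \(s>0\). This is equivalent to finiteness of \(K_X(s)=\lambda\E(e^{sJ}-1-sJ)\). By Theorem~\ref{thm:factor}, it is also equivalent to finiteness of \(M_{R_X}(s)\). So \(X\) has a positive exponential moment, and \(R_X\) has one as well, which is what Definition~\ref{def:Mclass} requires.

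\emph{Identifying the threshold.} From \eqref{eq:LK}, the third cumulant is \(\kappa_3(X)=\int x^3\,\nu(dx)=\lambda\E J^3\). From \eqref{eq:CPvariance}, \(V=\lambda\E J^2\). Hence
\[
\frac{\kappa_3(X)}{3V}=\frac{\E J^3}{3\E J^2},
\]
which agrees with \eqref{eq:e2mean}. One can also see this directly from \eqref{eq:meanR} together with Proposition~\ref{prop:e2}. This quantity is strictly positive, since \(J\geq0\) is not almost surely zero.

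\emph{Transferring M-class membership.} Proposition~\ref{prop:e2} gives \(R_X\stackrel d=J^{e2}\). Membership in \(\Mclass\) depends only on the law, so \(R_X\in\Mclass\) if and only if \(J^{e2}\in\Mclass\). Substituting this and the threshold identity into \eqref{eq:Mclass-equivalence} yields \eqref{eq:CPM}.

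There is no real obstacle here. The only point needing care is to make sure "positive exponential moment" is interpreted consistently for \(J\), for \(X\) and for \(R_X\). The equivalence of finiteness in Theorem~\ref{thm:factor} handles this.
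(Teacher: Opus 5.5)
Your proposal is correct and follows the paper's route: the paper obtains the corollary directly from Theorem~\ref{thm:Mclass}, using the identification $R_X\stackrel d=J^{e2}$ of Proposition~\ref{prop:e2} and the mean formula \eqref{eq:e2mean}. Your extra checks make explicit what the paper leaves implicit, namely non-Gaussianity from $\E J^2>0$ and the transfer of the exponential-moment hypothesis among $J$, $X$ and $R_X$ via Theorem~\ref{thm:factor}.
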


There is also a purely equilibrium-order formulation. For nonnegative \(U,W\), write \(U\leq_{\mgf}W\) when \(M_U(t)\leq M_W(t)\) for every positive argument for which the comparison is defined. \citet[Theorem~3.5]{KlarMuller2003} show that a lifetime law \(F\) is in the M-class if and only if it dominates its first equilibrium law in MGF order. Since the first equilibrium transform of \(J^{e2}\) is the third-order equilibrium law \(J^{e3}\),
\begin{equation}\label{eq:e-order}
 J^{e2}\in\Mclass
 \quad\Longleftrightarrow\quad
 J^{e2}\geq_{\mgf}J^{e3}.
\end{equation}
Equations~\eqref{eq:CPM} and \eqref{eq:e-order} connect an ageing comparison directly to the best variance-exact concentration scale of the compound sum.

\section{Exact examples and three optimization mechanisms}\label{sec:examples}

\subsection{Gamma jumps}

Let \(J\sim\operatorname{Gamma}(\alpha,\text{scale } \beta)\), where \(\alpha,\beta>0\). The jump MGF is
\[
 M_J(t)=(1-\beta t)^{-\alpha},
 \qquad t<1/\beta.
\]
Using either \eqref{eq:factor} or the compound Poisson cumulant generating function gives, with \(x=\beta t\),
\begin{equation}\label{eq:gamma-MR}
 M_{R_X}(t)
 =
 \frac{2\{(1-x)^{-\alpha}-1-\alpha x\}}
 {\alpha(\alpha+1)x^2}.
\end{equation}

\begin{proposition}[Exact Gamma-jump scale]\label{prop:gamma}
For compound Poisson Gamma jumps,
\begin{equation}\label{eq:gamma-scale}
 \boxed{
 c_*(X)
 =
 \beta\max\left\{1,\frac{\alpha+2}{3}\right\}.
 }
\end{equation}
\end{proposition}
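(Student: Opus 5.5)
By Proposition~\ref{prop:scale-time} and Proposition~\ref{prop:e2} we may take \(\beta=1\). Write \(x=t\in(0,1)\) and \(M(x)=M_{R_X}(x)\) from \eqref{eq:gamma-MR}. Here \(\tau_X=1\), and \(\E R_X=\E J^3/(3\E J^2)=(\alpha+2)/3\). Proposition~\ref{prop:boundary} therefore already gives the lower bound
\[
 c_*(X)\geq\max\{1,(\alpha+2)/3\}=:c.
\]
By Theorem~\ref{thm:variational} and \eqref{eq:expdom}, it remains to prove the single inequality
\[
 M(x)\leq\frac{1}{1-cx},\qquad 0\leq x<1/c\le 1.
\]

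\emph{Case \(\alpha\geq1\).} Here \(c=(\alpha+2)/3\), and the task is to show \(R_X\in\Mclass\) (Theorem~\ref{thm:Mclass}). I would compare power series coefficientwise. We have \(M(x)=\sum_n \E R^n x^n/n!\), with
\[
 \frac{\E R^n}{n!}=\frac{2(\alpha)_{n+2}}{\alpha(\alpha+1)(n+2)!}=\frac{2(\alpha+2)_n}{(n+2)!},
\]
where \((\alpha)_k\) is the rising factorial. It suffices to show
\[
 \frac{2(\alpha+2)_n}{(n+2)!}\leq c^n .
\]
This is the moment order \eqref{eq:moment-order}, which is stronger than needed but convenient. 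Rewrite it as
\[
 \prod_{k=0}^{n-1}\frac{\alpha+2+k}{3}\leq\frac{(n+2)!}{2\cdot 3^{n}}\cdot 3^n\Big/ \dots
\]
More cleanly, prove \(a_n:=2(\alpha+2)_n/((n+2)!\,c^n)\le1\) by induction. The ratio is
\[
 \frac{a_{n+1}}{a_n}=\frac{3(\alpha+2+n)}{(n+3)(\alpha+2)},
\]
and this is \(\le1\) iff \(3n+3\alpha+6\le (n+3)(\alpha+2)\), i.e. \(0\le n(\alpha-1)\). That holds for \(\alpha\ge1\). Since \(a_0=1\), the claim follows. This case is therefore easy.

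\emph{Case \(\alpha<1\).} Here \(c=1\), and we must show \(M(x)\le(1-x)^{-1}\) on \((0,1)\). The coefficientwise approach still works, but with respect to the exponential of mean \(1\): we need \(2(\alpha+2)_n/(n+2)!\le1\). Now
\[
 \frac{2(\alpha+2)_n}{(n+2)!}=\prod_{k=1}^{n}\frac{\alpha+1+k}{k+2}\le1
\]
term by term, since \(\alpha+1+k\le k+2\) when \(\alpha\le1\). So the moment bound holds with \(c=1\), and hence so does the MGF bound on all of \((0,1)\). Combined with the boundary lower bound, this gives \(c_*=1\).

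Thus both cases reduce to checking Pochhammer ratios, and the only step I expect to need care is verifying the coefficient formula for \(M\) from \eqref{eq:gamma-MR}. Expanding \((1-x)^{-\alpha}=\sum_k(\alpha)_k x^k/k!\), dropping the terms \(k=0,1\), and dividing by \(\alpha(\alpha+1)x^2/2\) gives it directly, using \((\alpha)_{n+2}=\alpha(\alpha+1)(\alpha+2)_n\). Finally, rescaling restores the factor \(\beta\) via \eqref{eq:scaling}.
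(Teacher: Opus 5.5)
Your proof is correct and follows essentially the same route as the paper. You expand $M_{R_X}$ as the power series with coefficients $2(\alpha+2)_n/(n+2)!$, use the consecutive ratio $(\alpha+n+2)/(n+3)$ to bound these by $1$ when $\alpha\le1$ and by $((\alpha+2)/3)^n$ when $\alpha\ge1$, and match against the boundary and local lower bounds (your reduction to $\beta=1$ by scaling is equivalent to the paper's substitution $x=\beta t$); the unfinished ``Rewrite it as \dots'' display is a false start and should be deleted.
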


\begin{proof}
For \(|x|<1\), expand \eqref{eq:gamma-MR} as
\begin{equation}\label{eq:gamma-series}
 M_{R_X}(t)
 =
 \sum_{m=0}^{\infty}a_mx^m,
 \qquad
 a_m=\frac{2(\alpha+2)_m}{(m+2)!}.
\end{equation}
Here \(a_0=1\), and
\begin{equation}\label{eq:gamma-ratio}
 \frac{a_{m+1}}{a_m}
 =
 \frac{\alpha+m+2}{m+3}.
\end{equation}

Suppose first that \(0<\alpha\leq1\). Equation~\eqref{eq:gamma-ratio} is at most one, so \(a_m\leq1\) for every \(m\). Therefore
\[
 M_{R_X}(t)
 \leq
 \sum_{m=0}^{\infty}x^m
 =
 \frac1{1-\beta t},
 \qquad 0\leq t<1/\beta.
\]
Thus \(c=\beta\) is feasible. On the other hand, the MGF boundary is \(\tau_X=1/\beta\), so Proposition~\ref{prop:boundary} gives \(c_*(X)\geq\beta\).

Now suppose that \(\alpha\geq1\) and put
\[
 d=\frac{\alpha+2}{3}.
\]
For every \(m\geq0\),
\[
 \frac{\alpha+m+2}{m+3}\leq d;
\]
after clearing denominators this inequality is equivalent to \(m\leq\alpha m\). It follows that \(a_m\leq d^m\) and hence
\[
 M_{R_X}(t)
 \leq
 \frac1{1-d\beta t},
 \qquad 0\leq t<1/(d\beta).
\]
Therefore \(c=\beta d\) is feasible. The local lower bound is
\[
 \frac{\E J^3}{3\E J^2}
 =
 \frac{\beta(\alpha+2)}{3}
 =
 \beta d,
\]
so equality follows.
\end{proof}

For \(0<\alpha<1\), the optimum is controlled by the positive MGF boundary and is strictly larger than the local cumulant bound. For \(\alpha>1\), it is controlled at the origin and the transformed law belongs to the M-class. The two mechanisms coincide at \(\alpha=1\), the exponential-jump equality case of Theorem~\ref{thm:extremizer}.

\subsection{A bounded two-point jump law}

Boundary and local control do not cover every possibility. Consider
\begin{equation}\label{eq:two-point}
 \Pp(J=1)=\frac{39}{40},
 \qquad
 \Pp(J=5)=\frac1{40}.
\end{equation}
Its first four moments are
\begin{equation}\label{eq:two-moments}
 \E J=\frac{11}{10},
 \quad
 \E J^2=\frac85,
 \quad
 \E J^3=\frac{41}{10},
 \quad
 \E J^4=\frac{83}{5}.
\end{equation}
For \(R_X=J^{e2}\), equations~\eqref{eq:cumulant-moment} or direct integration give
\begin{equation}\label{eq:R-two-moments}
 m:=\E R_X=\frac{41}{48},
 \qquad
 \E R_X^2=\frac{83}{48}.
\end{equation}
If \(R_X\) belonged to the M-class, comparison of the second derivatives at zero in \eqref{eq:Mclass} would require
\[
 \E R_X^2\leq2m^2.
\]
But
\[
 \frac{83}{48}
 >
 2\left(\frac{41}{48}\right)^2,
\]
so \(R_X\notin\Mclass\). Theorem~\ref{thm:Mclass} therefore gives
\begin{equation}\label{eq:two-strict}
 c_*(X)>\frac{41}{48}.
\end{equation}

Since \(J\) is bounded, \(M_{R_X}\) is entire and \(\tau_X=\infty\); the boundary obstruction is zero. More explicitly,
\begin{equation}\label{eq:two-MR}
 M_{R_X}(t)
 =
 \frac{
 2\left\{
 (39e^t+e^{5t})/40-1-11t/10
 \right\}}
 {(8/5)t^2}.
\end{equation}
The function \(\Psi_X\) is continuous on \((0,\infty)\),
\[
 \lim_{t\downarrow0}\Psi_X(t)=\frac{41}{48},
 \qquad
 \lim_{t\to\infty}\Psi_X(t)=0.
\]
Together with \eqref{eq:two-strict}, these limits imply that the supremum is attained at some interior point.

Numerical maximization of the explicit expression \eqref{eq:two-MR} gives
\begin{equation}\label{eq:two-numeric}
 t_*\approx0.473723112,
 \qquad
 c_*(X)\approx0.895416096.
\end{equation}
At an interior stationary point, the equation used for the computation is
\begin{equation}\label{eq:stationary}
 tM_{R_X}'(t)
 =
 M_{R_X}(t)\{M_{R_X}(t)-1\}.
\end{equation}
The computation establishes existence, but not uniqueness, of an interior maximizer.

\subsection{Interpretation}

The examples distinguish three ways in which the variational supremum can be determined:
\begin{enumerate}
\item \emph{Local control}: for a non-Gaussian law, the supremum in \eqref{eq:variational} is determined in the limit \(t\downarrow0\), equivalently \(R_X\in\Mclass\).
\item \emph{Boundary control}: the positive MGF abscissa forces the optimal pole, as for Gamma jumps with \(0<\alpha<1\).
\item \emph{Interior control}: \(\Psi_X\) exceeds both endpoint obstructions and attains its maximum at a positive finite argument, as in \eqref{eq:two-point}.
\end{enumerate}
The third case explains why the exact variational formula cannot in general be replaced by the maximum of the local and boundary lower bounds.

\section{Discussion}\label{sec:discussion}

The transform \(R_X\) organizes the two tails differently. Its nonnegativity gives the variance-exact sub-Gaussian estimate at negative arguments, while its positive MGF determines the optimal sub-gamma scale. Near zero it records the normalized third cumulant. In the compound Poisson case it is the second-order equilibrium distribution of the jump law, so the concentration problem becomes a reliability-theoretic comparison.

Spectral positivity is essential to this reduction because it makes \(R_X\) nonnegative and permits comparison with an exponential law. Signed jump measures lose that order structure. The same obstacle appears in higher dimensions, where neither a scalar equilibrium transform nor a single scale parameter captures all directions.

\section*{Statements and Declarations}
\addcontentsline{toc}{section}{Statements and Declarations}

\bmhead{Funding}

The author received no specific funding for this work.

\bmhead{Competing interests}

The author declares no competing interests.

\bmhead{Data availability}

No datasets were generated or analyzed during the current study.

\bmhead{Author contributions}

The sole author developed the theory, verified the calculations, and wrote the manuscript.

\bmhead{Ethics approval}

Not applicable.

\bmhead{Use of generative artificial intelligence}

OpenAI Codex assisted with English translation and Lean formalization of the proofs.

\bibliography{references}

\end{document}